\newtheorem{theorem}{Theorem}[section]
\newtheorem{lemma}[theorem]{Lemma}
\newtheorem{definition}[theorem]{Definition}
\newtheorem{remark}[theorem]{Remark}
\numberwithin{equation}{section}
\newif\ifcolor
\keywords{Dominative $p$-Laplacian, viscosity solutions, Power concavity.} \subjclass[2010]{35J15, 35J60.}
\begin{document}
	
	\raggedbottom
	
	\title{Concave power solutions of the Dominative $p$-Laplace equation}
	
	\author[F. H\o eg]{Fredrik Arbo H\o eg}
	\address{Department of Mathematical Sciences\\
		Norwegian University of Science and Technology\\
		NO-7491 Trondheim\\ Norway}
	\email{fredrik.hoeg@ntnu.no}

	\maketitle
	
	% \subjclass[2000]{Primary: 35L70; Secondary: 49K20}
	
	\begin{abstract}
		In this paper, we study properties of solutions of the Dominative $p$-Laplace equation with homogeneous Dirichlet boundary conditions in a bounded convex domain $\Omega$. For the equation $-\mathcal{D}_p u= 1$, we show that $\sqrt{u}$ is concave, and for the eigenvalue problem $\mathcal{D}_p u + \lambda u=0$, we show that $\log {u}$ is concave. 
	\end{abstract}

	\section{Introduction}	
The \textit{Dominative} $p$-Laplace operator was defined as 
\begin{align*}
\mathcal{D}_p u := \Delta u  + (p-2)\lambda_{\max} (D^2 u), \quad p \geq 2, 
\end{align*}	
by Brustad in [B1] and later studied in [B2]. See also [BLM] for a stochastic interpretation and a game-theoretic approach of the equation. Here, $\lambda_{\max}$ denotes the largest eigenvalue of the Hessian matrix
\begin{align*}
D^2 u = \left( \frac{\partial^2 u}{\partial x_i \partial x_j}   \right)_{ij}.
\end{align*}
We shall study the two equations 
\begin{align*}
-\mathcal{D}_p u = 1 \quad \text{and} \quad \mathcal{D}_p u + \lambda u =0
\end{align*}
in a bounded convex domain $\Omega \subset \mathbb{R}^n$. The positive solutions with zero boundary values have the property for $-\mathcal{D}_p u =1$ that $\sqrt{u}$ is concave, see Theorem \ref{maintheorem} below. In Theorem \ref{maintheoremeigenvalue} we show that for $\mathcal{D}_p u + \lambda u =0$,  $\log u $ is concave. Problems related to concave solutions have been studied for $p$-Laplace type equations, and we give a quick review of the results. The operator is closely related to the \textit{normalized} $p$-Laplace operator,
\begin{align*}
\Delta_p^N u = |\nabla u |^{2-p} \text{div} \left (  |\nabla u|^{p-2} \nabla u   \right ),
\end{align*}
which describes a Tug-of-war game with noise, see [MPR]. Due to this, the operator has been studied extensively over the last 15 years, and we refer to [D],[HL], [APR] for an introduction and some regularity results. The solutions are weak and appear in the form of \textit{viscosity solutions} and we refer to [CIL] for an introduction of viscosity solutions.
If $u$ is a solution of the problem
	\begin{align*}
	\begin{cases}
	-\Delta u = 1 \quad &\text{in} \, \, \Omega\\
	u=0 \quad &\text{on} \, \, \partial \Omega
	\end{cases}
	\end{align*}
in a bounded convex domain $\Omega \subset \mathbb{R}^n$, one can show that $\sqrt{u}$ is concave. This problem, including more complex right-hand sides, was studied in the 1970's and 1980's by [Ka],[Ke],[Ko] and [M]. For $n=1$ and $n=2$, a brute force calculation shows that $\sqrt{u}$ is concave. For $n \geq 3$ the proofs are more complicated. For the ordinary $p$-Laplacian, [S] showed that  $u^{\frac{p-1}{p}}$ is concave. One should note that simply setting $p=2$ does not simplify the proof. Thus, the papers [Ke] and [Ko] are still of great value. For the infinity Laplacian, $\Delta_\infty u= \left \langle D^2 u \nabla u, \nabla u  \right \rangle$, [CF] showed that $u^\frac{3}{4}$ is concave. Our result for the Dominative $p$-Laplace equation can be formulated in the following theorem. We say that $\Omega$ satisfies the interior sphere condition if for all $y \in \partial \Omega$ there is an $x \in \Omega$ and an open ball $B_r(x)$ such that $B_r(x) \subset \Omega$ and $y \in \partial B_r(x)$. 

\begin{theorem}
	Let $u \in C(\bar{\Omega})$ be a viscosity solution of 
	\begin{align*}
	\begin{cases}
	-\mathcal{D}_p u= 1 \quad  &\text{in} \, \, \Omega \\
	\quad u= 0 \quad &\text{on} \, \,  \partial \Omega
	\end{cases}
	\end{align*}
	in a bounded convex domain $\Omega \subset \mathbb{R}^n$ which satisfies the interior sphere condition. Then $\sqrt{u}$ is concave. 
	\label{maintheorem}
\end{theorem}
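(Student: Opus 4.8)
The plan is to set $w=\sqrt{u}$ and prove that $w$ is concave by the classical \emph{concavity function} method (as in [Ke], [S]), adapted to the viscosity setting through the maximum principle for semicontinuous functions of [CIL]. The starting observation is the pointwise representation
\[
\mathcal{D}_p[M]=\operatorname{tr}(M)+(p-2)\lambda_{\max}(M)=\max_{|\xi|=1}\operatorname{tr}\!\big(A_\xi M\big),\qquad A_\xi=I+(p-2)\,\xi\otimes\xi,
\]
valid for any symmetric matrix $M$, which exhibits $\mathcal{D}_p$ as a maximum of the constant-coefficient uniformly elliptic trace operators $M\mapsto\operatorname{tr}(A_\xi M)$ (the eigenvalues of $A_\xi$ being $1$ and $p-1$). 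The exponent $1/2$ is the natural one: on a ball the explicit solution is $u(x)=\frac{R^2-|x|^2}{2(n+p-2)}$, for which $\sqrt{u}$ is concave. To prove concavity of $w$ I would study the concavity function
\[
\mathcal{C}(x,y,\lambda)=w\big(\lambda x+(1-\lambda)y\big)-\lambda w(x)-(1-\lambda)w(y),\qquad x,y\in\overline{\Omega},\ \lambda\in[0,1],
\]
and argue by contradiction, assuming $\min \mathcal{C}=-m<0$.

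The first step is to localize the minimum in the interior. Using the interior sphere condition together with comparison against the explicit ball solution above, one obtains two-sided barriers $c\,\mathrm{dist}(x,\partial\Omega)\le u(x)\le C\,\mathrm{dist}(x,\partial\Omega)$ near $\partial\Omega$; hence $w$ behaves like $\sqrt{\mathrm{dist}(\cdot,\partial\Omega)}$ and its inward normal derivative blows up at the boundary. This steepness, together with $w=0$ on $\partial\Omega$ and $w>0$ in $\Omega$ (strong maximum principle), rules out a negative infimum being approached at the boundary or on the diagonal $x=y$; thus the minimum is attained at interior points $x_0\neq y_0$, with $z_0=\lambda_0 x_0+(1-\lambda_0)y_0\in\Omega$ and $\lambda_0\in(0,1)$.

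Next I would extract second-order information. Writing the minimum of $\mathcal{C}$ as a maximum of a sum of the semicontinuous functions $\tfrac12 w$, $\tfrac12 w$ and $-w$ coupled through the smooth constraint $z=\lambda_0 x+(1-\lambda_0)y$ (enforced by a vanishing quadratic penalty), the theorem on sums of [CIL] yields a common gradient $q=\nabla w(x_0)=\nabla w(y_0)=\nabla w(z_0)$ in the viscosity sense, the first-order relation $\langle q,\,x_0-y_0\rangle=w(x_0)-w(y_0)$ coming from the $\lambda$-variation, and symmetric matrices $X,Y$ (super-jet Hessians of $w$ at $x_0,y_0$) and $Z$ (a sub-jet Hessian of $w$ at $z_0$) satisfying $Z\ge \tfrac12(X+Y)$ as quadratic forms. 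Passing to $u=w^2$ turns these into jets for $u$ with Hessians of the form $2\,q\otimes q+2w(\cdot)\,(\cdot)$, and inserting them into the two viscosity inequalities $\mathcal{D}_p[\,\cdot\,]\ge-1$ at $x_0,y_0$ and $\mathcal{D}_p[\,\cdot\,]\le-1$ at $z_0$, via the representation above, produces scalar inequalities in which the optimal directions $\xi$ realizing $\lambda_{\max}$ enter.

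I expect the main obstacle to be closing the contradiction. Testing only the traces (as suffices for the Laplacian) together with $Z\ge\frac12(X+Y)$ yields, at the midpoint, only the \emph{harmonic-mean} bound $w(z_0)\ge \frac{2\,w(x_0)w(y_0)}{w(x_0)+w(y_0)}$, which is strictly weaker than the \emph{arithmetic-mean} inequality $w(z_0)\ge \tfrac12\big(w(x_0)+w(y_0)\big)$ that a negative minimum would have to violate. Bridging this gap is the heart of the matter: it requires exploiting the free parameter $\lambda$ and the extra relation $\langle q,x_0-y_0\rangle=w(x_0)-w(y_0)$ it provides, and testing the matrix inequality $Z\ge\frac12(X+Y)$ against the unit eigenvector $\xi$ that realizes $\lambda_{\max}$ for the Hessian of $u$ at $z_0$, so that the nonlinear term $(p-2)\lambda_{\max}$ contributes with the correct sign. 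Carrying out this algebra, and verifying that it degrades gracefully to the non-$C^2$ viscosity setting, is where I anticipate the real work to lie.
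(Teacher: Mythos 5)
Your proposal takes a genuinely different route from the paper --- a Korevaar--Kennington concavity-function argument rather than the paper's convex-envelope method --- but it contains a genuine gap, and you name it yourself: the contradiction is never closed. Everything you carry out (the sup-representation $\mathcal{D}_p[M]=\max_{|\xi|=1}\text{tr}\left( (I+(p-2)\xi\otimes\xi)M\right)$, the barrier/steepness localization of the minimum of $\mathcal{C}$, the common gradient $q$, the matrix inequality $Z\geq \lambda X+(1-\lambda)Y$) is set-up; the decisive step --- converting the three viscosity inequalities plus that matrix inequality into a contradiction with $\min\mathcal{C}<0$ --- is deferred as ``where I anticipate the real work to lie.'' That is not a deferrable technicality: it is the entire theorem. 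Your own diagnosis shows why it is problematic. The concavity function hands you an \emph{arithmetic-mean} comparison of the Hessians at the three contact points, while the operator $D_p$ and the equation satisfied by $\sqrt{u}$ only cooperate with a \emph{harmonic-mean} bound; no argument is given (only hoped for) that varying $\lambda$ and testing against the maximizing eigenvector bridges the two, and it is far from clear that it does for a nonsmooth, convex operator like $\mathcal{D}_p$ in the viscosity setting.

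The paper does not resolve this mismatch; it avoids it, and that is worth internalizing. Passing to $v=-\sqrt{u}$ (Lemma \ref{squareqndom}) and to its convex envelope $v_{**}$, the second-order contact inequality of Alvarez--Lasry--Lions gives $A-\epsilon A^2 \leq \left( \sum_i \mu_i A_i^{-1}\right)^{-1}$, i.e.\ precisely the \emph{harmonic mean} of the Hessians at the contact points, and the structural inequality of Lemma \ref{DiscreteOpprop} (a consequence of convexity of $(q,A)\mapsto \left\langle q, A^{-1}q\right\rangle$) is a harmonic-mean inequality for $D_p$ that matches both this matrix bound and the $\frac{1}{v}$-structure of the transformed right-hand side $\frac{1}{v}\left( (p-1)|\nabla v|^2+\frac{1}{2}\right)$. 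So the harmonic mean, which obstructs your argument, is exactly the mechanism that makes the paper's argument close: $v_{**}$ is a supersolution of the same equation (Lemma \ref{convenveqn}), hence $v_{**}^2$ is a subsolution of $-\mathcal{D}_p u=1$, and the comparison principle forces $v_{**}=v$. (Note also that the interior sphere condition enters the paper through Lemma \ref{boundary}, playing the role of your barrier step.) If you wish to salvage your route, you would need to prove a concavity maximum principle tailored to the equation for $\sqrt{u}$ under $\mathcal{D}_p$; as submitted, the proposal is a plan whose central step is missing.
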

 
Further, we study the eigenvalue problem and give the following result.

\begin{theorem}
	Let $u\in C(\bar{\Omega})$ be a positive viscosity solution of 
	\begin{align*}
	\begin{cases}
	-\mathcal{D}_p u = \lambda u \quad  &\text{in} \, \, \Omega \\
	u=0 \quad &\text{on} \, \, \partial \Omega 
	\end{cases}
	\end{align*}
	with $\lambda >0$ in a bounded convex domain $\Omega \subset{R}^n$. Then $\log u $ is concave. 
	\label{maintheoremeigenvalue}
\end{theorem}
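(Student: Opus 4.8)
The plan is to reduce the statement to the concavity of $v := \log u$ and then run a two–point concavity maximum principle in the viscosity framework. Since $u>0$ in $\Omega$ and $u$ solves $\mathcal{D}_p u + \lambda u = 0$, a computation using $\nabla u = e^{v}\nabla v$ and $D^2 u = e^{v}(D^2 v + \nabla v\otimes\nabla v)$, together with $\lambda_{\max}(e^{v}M)=e^{v}\lambda_{\max}(M)$ for $e^{v}>0$, shows that $v$ is, in the viscosity sense, a solution of
\[
\Delta v + |\nabla v|^2 + (p-2)\,\lambda_{\max}\!\left(D^2 v + \nabla v\otimes\nabla v\right) = -\lambda .
\]
First I would record this transformation at the level of test functions so that the sub/supersolution properties of $u$ transfer to $v$. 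It then suffices to prove that the concavity function $C(x,y):=v\big(\tfrac{x+y}{2}\big)-\tfrac12 v(x)-\tfrac12 v(y)$ is nonnegative on $\bar\Omega\times\bar\Omega$.

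Next I would dispose of the boundary, where the logarithm is a genuine advantage over Theorem \ref{maintheorem}. Because $u=0$ on $\partial\Omega$ and $u>0$ inside, we have $v\to-\infty$ at $\partial\Omega$; hence whenever one of $x,y$ approaches the boundary while the other stays interior, the midpoint remains interior by convexity, and the term $-\tfrac12 v(x)$ forces $C\to+\infty$. Consequently a negative infimum of $C$ cannot be approached at the boundary, and since $C$ vanishes on the diagonal, a negative infimum — if it existed — would be attained at an interior pair $(x_0,y_0)$ with $x_0\neq y_0$. This is precisely why no interior sphere condition is needed here.

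At such an interior minimum I would apply the Jensen--Ishii lemma to the doubled variables. This yields a common gradient $\xi=\nabla v(x_0)=\nabla v(y_0)=\nabla v(z_0)$ at the three points $x_0,y_0,z_0=\tfrac{x_0+y_0}{2}$, surrogate Hessians $X,Y,Z$ obeying a matrix inequality of the form $\mathrm{diag}(X,Y)\le\tfrac12\left(\begin{smallmatrix}Z&Z\\ Z&Z\end{smallmatrix}\right)$, and three copies of the equation above evaluated with the common gradient $\xi$. One then compares these using that $M\mapsto\mathrm{tr}(M)$ is linear, $M\mapsto\lambda_{\max}(M+\xi\otimes\xi)$ is monotone, and $|\xi|^2$ is common to all three.

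The hard part is strictness. Because the eigenvalue term makes the equation for $v$ invariant under $v\mapsto v+\text{const}$ — reflecting that eigenfunctions are determined only up to scaling — there is no zeroth–order term to break the tie, so the comparison of the three equations only yields the identity $-\lambda\le-\lambda$ and not the strict inequality that would contradict $C(x_0,y_0)<0$; moreover the convexity of $M\mapsto\lambda_{\max}(M)$ points the wrong way, so the operator does not break the degeneracy either. This is the same obstruction that prevents the classical concavity maximum principle from proving log-concavity of the first Dirichlet eigenfunction of the Laplacian directly. To manufacture strictness I would pass through an approximation with log-concave data: solving $-\mathcal{D}_p u_{k+1}=\lambda u_k$ by inverse iteration from a log-concave $u_0$, where one inhomogeneous solve with a log-concave right-hand side preserves log-concavity. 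Here the source of the transformed $v$-equation is $-\lambda u_k(x)e^{-v}$, which depends genuinely on $x$ and on $v$, restoring the convexity structure needed to close the maximum principle with a strict sign; one then lets $u_k\to u$, concavity being preserved under local uniform convergence. An alternative avoiding the iteration is to show that $C$ is a viscosity supersolution of a degenerate linear equation near $(x_0,y_0)$ and to invoke a strong minimum principle propagating the negative value to the diagonal, where $C=0$; verifying the ellipticity of that linearization along a path to the diagonal would then be the corresponding technical crux.
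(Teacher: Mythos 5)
Your reduction to the transformed equation for $v=\log u$ is correct, and your observation that the blow-up of $\log u$ at $\partial\Omega$ makes any interior sphere condition unnecessary matches the paper's own remark (it works with $v=-\ln u$, Lemma \ref{logequation}). But the proof does not close, and you say so yourself: at the interior minimum of the concavity function $C$, the Jensen--Ishii comparison of the three equations yields only $-\lambda\le-\lambda$, because the $v$-equation is invariant under adding constants and because convexity of $M\mapsto\lambda_{\max}(M)$ gives $\lambda_{\max}\bigl(\tfrac12(X+Y)+\xi\otimes\xi\bigr)\le\tfrac12\lambda_{\max}(X+\xi\otimes\xi)+\tfrac12\lambda_{\max}(Y+\xi\otimes\xi)$, which is the wrong direction. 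An argument whose central step is acknowledged to produce no information is a gap, and neither of your proposed repairs fills it: the inverse-iteration route requires (i) that a single inhomogeneous solve $-\mathcal{D}_p u_{k+1}=\lambda u_k$ with log-concave right-hand side has a log-concave solution, which is a theorem at least as hard as the one being proved (for the Laplacian this is Korevaar's result, and nothing analogous for $\mathcal{D}_p$ is proved or cited here), and (ii) convergence of the normalized iterates to the given $u$, which needs existence, uniqueness and simplicity theory for the principal eigenfunction of $\mathcal{D}_p$ that is nowhere established; the strong-minimum-principle alternative is only named, not carried out. There is also a smaller unaddressed point in the boundary analysis: you exclude the case where exactly one of $x,y$ tends to $\partial\Omega$, but if $x$ and $y$ tend to the \emph{same} boundary point, the midpoint leaves $\Omega$ as well and $C$ has the indeterminate form $-\infty+\infty$, so the claim that a negative infimum is attained at an interior pair itself requires an argument (e.g., exhaustion by compact convex subsets).

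For contrast, the paper sidesteps the strictness obstruction entirely by the Alvarez--Lasry--Lions envelope method rather than a two-point maximum principle: it proves that the convex envelope $v_{**}$ of $v=-\ln u$ is still a viscosity supersolution of the same transformed equation (Lemma \ref{logstareqn}, resting on the matrix inequality of Lemma \ref{DiscreteOpprop}, i.e.\ harmonic-type concavity of $X\mapsto 1/D_p(X^{-1})$, which is the inequality that \emph{does} go the right way for $\mathcal{D}_p$), transforms back via Lemma \ref{logequation} to conclude that $e^{-v_{**}}$ is a viscosity subsolution of $-\mathcal{D}_p w=\lambda w$, and then applies the comparison principle to get $e^{-v_{**}}\le u=e^{-v}$; together with $v_{**}\le v$ this forces $v_{**}=v$, i.e.\ convexity of $v$. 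The translation invariance that defeated your two-point argument is harmless there, because the contradiction is extracted from comparing $u$ with the back-transformed envelope rather than from a pointwise three-term inequality. If you want to salvage your approach, the ingredient to import is precisely Lemma \ref{DiscreteOpprop}: it is the replacement for the failed midpoint inequality for $\lambda_{\max}$.
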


\begin{remark}
We give a remark on what happens when $p \rightarrow \infty$ in Theorem \ref{maintheorem}. After dividing the equation by $p$ and letting $p$ approach infinity, the following equation is obtained
\begin{align*}
\begin{cases}
-\lambda_{\max}(D^2 u )= 0 \quad  &\text{in} \, \, \Omega \\
\quad u= 0 \quad &\text{on} \, \,  \partial \Omega.
\end{cases}
\end{align*}
This equation has the solution $u=0$, which is obviously already concave. This is better than the square root being concave, so for $p=\infty$ a stronger result is obtained. (For a less trivial result, another normalization with $p$ is needed.)

\end{remark}
For the Helmholtz equation $\Delta u + \lambda u=0$, the problem related to concave logarithmic solutions has been studied in [BL], [Ko] and [CS].
The nonlinear eigenvalue problem associated with the $p$-Laplace equation has been studied for example in [L] and [S]. In [S], Sakaguchi showed that $\log{u}$ is a concave function.

%For $n=1$, the above problem in $[-1,1]$ have solutions, for $n \in \mathbb{Z}$,  $u_n=A_n \cos{(n+\frac{1}{2})\pi x}$ when $\lambda_n= (p-1)(n+\frac{1}{2})^2 \pi^2 $ and one can see directly that the square root is a concave function. For radial functions, $u(x)=u(|x|)$, [KKK] showed that the eigenvalue problem for the normalized $p$-Laplace equation has solutions which can be written as Bessel functions, and they form a complete orthonormal system in a weighted $L^2$-space. Here, the result is the same for the Dominative $p$-Laplace equation, since $\Delta_p^N u =\mathcal{D}_p u$ when $u$ is radial. 

\subsection*{Acknowledgements.} I wish to thank the referee for valuable comments.

\section{Preliminaries and notation}

The gradient of a function $f: \Omega_T \rightarrow \mathbb{R}$ is \\ $$\nabla f= \left( \frac{\partial f}{\partial x_1},...,\frac{\partial f}{\partial x_n} \right) $$ %\\[1em]
and its Hessian matrix is  \\ $$\left( D^2 f\right)_{ij} =\frac{\partial^2 f}{\partial x_i \partial x_j}.$$
We will use the operator 
\begin{align*}
\mathcal{D}_p u = \Delta u + (p-2)\lambda_{\max} (D^2 u)\\
\end{align*}
and if applied to a matrix $X \in S^n$, we use
\begin{align*}
D_p X = \text{tr}(X) + (p-2)\lambda_{\max} (X). 
\end{align*}
Also, the \textit{normalized} $p$-Laplace operator is referred to,
\begin{align*}
\Delta_p^N u = \Delta u + (p-2)\cdot\frac{1}{|\nabla u|^2 } \sum_{i,j=1}^N u_{x_i} u_{x_j} u_{x_i x_j}.
\end{align*}

\textbf{Viscosity solutions.} The Dominative $p$-Laplace operator is uniformly elliptic. Therefore, it is convenient to use viscosity solutions as a notion of weak solutions. Throughout the text, we always keep $p\geq2$. In the definition below, $g$ is assumed to be continuous in all variables. 

\begin{definition}
A function $u \in USC(\bar{\Omega})$ is a viscosity subsolution to ${-\mathcal{D}_p u=g(x,u, \nabla u)}$ if, for all $\phi \in C^2(\Omega)$,
\begin{align*}
-\mathcal{D}_p  \phi (x) \leq g(x,u,\nabla \phi).
\end{align*}
at any point $x \in \Omega$ where $u-\phi$ attains a local maximum. 
A function $u \in LSC(\bar{\Omega})$ is a viscosity supersolution to $-\mathcal{D}_p u=g(x,u,\nabla u)$ if, for all $\phi \in C^2(\Omega)$, 
\begin{align*}
-\mathcal{D}_p \phi (x) \geq g(x,u, \nabla \phi). 
\end{align*} 
at any point $x \in \Omega$ where $u-\phi$ attains a local minimum.\\
\\
\\
A function $u\in C(\bar{\Omega})$ is a viscosity solution of 
\begin{align*}
\begin{cases}
-\mathcal{D}_p u= g(x,u,\nabla u) \quad &\text{in} \, \, \Omega \\ 
\quad u= 0 \quad &\text{on} \, \,  \partial \Omega
\end{cases}
\end{align*}
if it is a viscosity sub- and supersolution of $-\mathcal{D}_p u= g(x,u,\nabla u)$ and $u= 0$ on $\partial \Omega$. 
\end{definition}
When defining viscosity solutions to $\-\Delta_p^N u = g(x,u,\nabla u)$, one has to be careful at points where the gradient vanishes. 
\begin{definition}
	
	A function $u \in USC(\bar{\Omega})$ is a viscosity subsolution of $-\Delta^N_p u=1$ if, for all $\phi \in C^2(\Omega)$,
	\begin{align*}
	\begin{cases}
	-\Delta^N_p   \phi (x) \leq 1 , \quad &\text{if} \,  \, \nabla \phi (x) \neq 0\\
	-\mathcal{D}_p \phi (x) \leq 1, \quad &\text{if} \, \, \nabla \phi (x)=0.
	\end{cases}  
	\end{align*}
	at any point $x \in \Omega$ where $u-\phi$ attains a local minimum. 
	A function $u \in LSC(\bar{\Omega})$ is a viscosity supersolution of $-\Delta^N_p  u=1$ if, for all $\phi \in C^2(\Omega)$, 
	\begin{align*}
	\begin{cases}
	-\Delta^N_p \phi (x) \geq 1, \quad &\text{if} \, \, \nabla \phi (x) \neq 0 \\
	\mathcal{D}_p (-\phi(x)) \geq 1, \quad &\text{if} \, \, \nabla \phi (x)=0.  
	\end{cases}
	\end{align*} 
	at any point $x \in \Omega$ where $u-\phi$ attains a local minimum. 
	A function $u\in C(\bar{\Omega})$ is a viscosity solution of 
	\begin{align*}
	\begin{cases}
	-\Delta^N_p u= 1 \quad &\text{in} \, \, \Omega \\ 
	\quad u= 0 \quad &\text{on} \, \,  \partial \Omega
	\end{cases}
	\end{align*}
	if it is a viscosity sub- and supersolution of $-\Delta^N_p u= 1$ and $u= 0$ on $\partial \Omega$. 
\end{definition}

We also need an equivalent definition of viscosity solutions using the \textit{sub-} and \textit{superjets}. For functions $u: \Omega \rightarrow \mathbb{R}^n$ they are given by

\begin{align*}
J^{2,+}u(x) = \Big \{ (q,X) \in \mathbb{R}^n \times S^n  : u(y) & \leq u(x) + \left \langle q, y-x \right \rangle \\
&+\frac{1}{2}\left \langle X(y-x), y-x \right \rangle  + o(|y-x|^2|) \, \text{as} \, \, y \rightarrow x  \Big \}
\end{align*}
and
\begin{align*}
J^{2,-}u(x) = \Big \{ (q,X) \in \mathbb{R}^n \times S^n  : u(y) & \geq u(x) + \left \langle q, y-x \right \rangle \\
&+\frac{1}{2}\left \langle X(y-x), y-x \right \rangle  + o(|y-x|^2|) \, \text{as} \, \, y \rightarrow x  \Big \}.
\end{align*}

\begin{definition}
A function $u \in USC(\bar \Omega)$ is a viscosity subsolution to ${-\mathcal{D}_p u =g(x,u, \nabla u)}$ if $(q,X) \in J^{2,+}u(x)$ implies
\begin{align*}
-D_p X \leq g(x,u,q).
\end{align*} 
A function $u \in USC(\bar \Omega)$ is a viscosity subsolution of $-\mathcal{D}_p u =g(x,u, \nabla u)$ if ${(q,X) \in J^{2,-}u(x)}$ implies
\begin{align*}
-D_p X \geq g(x,u,q).
\end{align*} 
A function $u\in C(\bar{\Omega})$ is a viscosity solution of 
\begin{align*}
\begin{cases}
-\mathcal{D}_p u= g(x,u,\nabla u) \quad &\text{in} \, \, \Omega \\ 
\quad u= 0 \quad &\text{on} \, \,  \partial \Omega
\end{cases}
\end{align*}
if it is a viscosity sub- and supersolution of $-\mathcal{D}_p u= g(x,u,\nabla u)$ and $u= 0$ on $\partial \Omega$. 

\end{definition}

We mention some results in [K] obtained for the normalized $p$-Laplace equation, which we will use together with the relationship between the normalized $p$-Laplace equation and the Dominative $p$-Laplace equation.

\begin{lemma}
	A function $u\in USC(\bar{\Omega})$ is a positive viscosity subsolution of $-\Delta^N_p u =1$ with $u= 0$ on $\partial \Omega$ if and only if $v=-\sqrt{u} \in LSC(\bar \Omega)$ is a negative viscosity supersolution of 
	\begin{align*}
	-\Delta^N_p v= \frac{1}{v}\left( (p-1)|\nabla v|^2 + \frac{1}{2}  \right). 
	\end{align*}
	\label{squaresolution}
\end{lemma}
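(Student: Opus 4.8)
The plan is to prove the equivalence of Lemma \ref{squaresolution} by a direct computation on test functions, exploiting the fact that the map $u \mapsto v = -\sqrt{u}$ is a smooth, monotone change of variables away from the boundary (where $u > 0$). First I would record the pointwise derivative identities for a $C^2$ test function: if $\psi = -\sqrt{\phi}$ with $\phi > 0$, then $\nabla \psi = -\tfrac{1}{2}\phi^{-1/2}\nabla \phi$ and a short chain-rule computation gives
\begin{align*}
D^2 \psi = -\tfrac{1}{2}\phi^{-1/2} D^2 \phi + \tfrac{1}{4}\phi^{-3/2}\,\nabla \phi \otimes \nabla \phi.
\end{align*}
The crucial structural observation is that the rank-one correction $\nabla \phi \otimes \nabla \phi$ adds a nonnegative term in the gradient direction, so it interacts cleanly with the normalized operator, which only sees the second derivative in the gradient direction through its extra $(p-2)$ term. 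I expect to verify the algebraic identity
\begin{align*}
-\Delta_p^N \psi = \tfrac{1}{2}\phi^{-1/2}\bigl(-\Delta_p^N \phi\bigr) - (p-1)\phi^{-3/2}\,\tfrac{1}{4}|\nabla \phi|^2,
\end{align*}
which, after substituting $\phi = v^2$ and simplifying, reproduces exactly the right-hand side $\tfrac{1}{v}\bigl((p-1)|\nabla v|^2 + \tfrac12\bigr)$ claimed in the statement.

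Next I would organize the logical equivalence around test functions and the maximum/minimum correspondence. If $u - \phi$ has a local minimum at $x_0$ (as in the supersolution definition for $-\Delta_p^N u = 1$), then since $t \mapsto -\sqrt{t}$ is increasing, $v - (-\sqrt{\phi}) = -\sqrt{u} + \sqrt{\phi}$ attains a local maximum (respectively minimum) at the corresponding point, so touching $u$ from below by $\phi$ translates into touching $v$ from above by $-\sqrt{\phi}$. Thus every admissible test function for $u$ produces an admissible test function for $v$ and vice versa, and I would carry the derivative identities above through both directions to show that the subsolution inequality $-\Delta_p^N \phi \le 1$ for $u$ is equivalent to the supersolution inequality for $v$ against the transformed right-hand side.

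The main obstacle is the behavior at points where the gradient of the test function vanishes, which is precisely why the second definition (with the $\mathcal{D}_p$ branch) is stated separately. At a critical point of $\phi$ one cannot divide by $|\nabla \phi|$, and the normalized operator is replaced by $\mathcal{D}_p$; I would therefore handle this case by hand, checking that when $\nabla \phi(x_0) = 0$ the rank-one term $\nabla \phi \otimes \nabla \phi$ drops out and the transformation reduces to $D^2 \psi = -\tfrac12 \phi^{-1/2} D^2\phi$, so that the $\mathcal{D}_p$-branch inequalities in the two definitions match up under the same scaling factor $\tfrac{1}{2}\phi^{-1/2} = -\tfrac{1}{2v}$. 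Finally I would note that positivity of $u$ in $\Omega$ guarantees $v < 0$ and keeps $\phi^{-1/2}$ finite throughout the interior, and that the boundary condition $u = 0$ corresponds to $v = 0$ on $\partial\Omega$, completing the equivalence.
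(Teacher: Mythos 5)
There is a genuine gap here: the two computational pivots of your argument both carry sign errors, and they are load-bearing. First, $t \mapsto -\sqrt{t}$ is \emph{decreasing}, not increasing. This is not cosmetic: the decreasing character of the substitution is exactly the mechanism that turns the \emph{sub}solution property of $u$ (tested at maxima of $u-\phi$, i.e.\ test functions touching $u$ from above) into the \emph{super}solution property of $v$ (tested at minima of $v-\psi$); your bookkeeping, which starts from minima of $u-\phi$ "as in the supersolution definition," is testing the wrong object. Second, your displayed identity is false. From $D^2\psi = -\tfrac12\phi^{-1/2}D^2\phi + \tfrac14\phi^{-3/2}\nabla\phi\otimes\nabla\phi$ and $\nabla\psi \parallel \nabla\phi$, the correct identity is
\begin{align*}
-\Delta_p^N \psi \;=\; -\tfrac{1}{2}\phi^{-1/2}\bigl(-\Delta_p^N \phi\bigr) \;-\; \tfrac{p-1}{4}\,\phi^{-3/2}|\nabla \phi|^2
\;=\; \frac{1}{2\psi}\bigl(-\Delta_p^N \phi\bigr) + \frac{p-1}{\psi}|\nabla\psi|^2 ,
\end{align*}
i.e.\ the principal term enters with the \emph{negative} coefficient $-\tfrac12\phi^{-1/2}=\tfrac{1}{2\psi}$, not $+\tfrac12\phi^{-1/2}$ as you wrote. (A one-dimensional check with $p=2$, $\phi(x)=x^2$, $\psi(x)=-x$ near $x=1$ kills your version: the left side is $0$, your right side is $-2/x$.) The sign matters: with the correct identity, $-\Delta_p^N\phi \le 1$ multiplied by the negative coefficient reverses the inequality and yields precisely $-\Delta_p^N \psi \ge \tfrac{1}{\psi}\bigl((p-1)|\nabla\psi|^2 + \tfrac12\bigr)$; with your identity the same hypothesis only gives the upper bound $-\Delta_p^N\psi \le -\tfrac{1}{2\psi} + \tfrac{p-1}{\psi}|\nabla\psi|^2$ --- wrong inequality direction \emph{and} wrong sign on the $\tfrac12$ --- so the claimed right-hand side is not reproduced and the proof collapses. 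The same confusion infects your critical-point discussion: at $\nabla\phi(x_0)=0$ the factor relating $D^2\psi$ to $D^2\phi$ is the negative number $\tfrac{1}{2\psi}$, and a negative factor swaps $\lambda_{\max}$ and $\lambda_{\min}$; that swap is exactly why the $-\mathcal{D}_p\phi \le \cdot$ branch of the subsolution definition pairs with the $\mathcal{D}_p(-\psi) \ge \cdot$ branch of the supersolution definition, and it cannot be waved through as "the same scaling factor."

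Two remarks on the strategy itself. Once the signs are repaired, your outline (transform test functions, chain rule, handle vanishing gradients separately) is viable and is essentially the standard route; note, though, that the paper does not prove this lemma at all --- it is quoted from [K] --- and the proof the paper actually writes out for the Dominative analogue (Lemma \ref{squareqndom}) goes in the opposite direction: given a test function $\phi$ touching $v$ from below, it uses the polynomial substitution $\psi=\phi^2$, which touches $u$ from above because squaring is decreasing on negative functions. That direction avoids differentiating square roots, makes the max/min interchange transparent, and is the cleaner way to set up whichever implication you compute by hand.
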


\begin{lemma}
	Let $\lambda>0$. A function $u\in USC(\bar{\Omega})$ is a positive viscosity subsolution of ${-\Delta^N_p u =\lambda u}$ if and only if $v=-\ln {u} \in LSC(\bar{\Omega})$ is a negative viscosity supersolution of 
	\begin{align*}
	-\Delta^N_p v=-(p-1)|\nabla v|^2 - \lambda. 
	\end{align*}
	\label{logsolns}

\end{lemma}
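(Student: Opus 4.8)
The plan is to treat this as a change of variables for viscosity solutions, entirely parallel to Lemma \ref{squaresolution}. Since $u>0$, the map $u \mapsto -\ln u$ is a smooth, strictly decreasing bijection with inverse $u = e^{-v}$; a decreasing change of variables interchanges sub- and supersolutions, and because $\ln$ is continuous and increasing it sends the class $USC(\bar\Omega)$ to $LSC(\bar\Omega)$, so $v=-\ln u \in LSC(\bar\Omega)$. As the substitution is invertible, it suffices to prove one implication and read the same chain of equalities backwards for the converse. The whole argument rests on a single pointwise identity relating $\Delta_p^N$ applied to $e^{-\psi}$ to $\Delta_p^N\psi$, combined with the standard manipulation of test functions.

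First I would record the transformation identity. If $\varphi = e^{-\psi}$ for $\psi \in C^2$, then $\nabla\varphi = -\varphi\,\nabla\psi$ and $D^2\varphi = \varphi\left(\nabla\psi\otimes\nabla\psi - D^2\psi\right)$, whence at every point where $\nabla\psi \neq 0$ one computes
\begin{align*}
\Delta_p^N\varphi = \varphi\left( (p-1)|\nabla\psi|^2 - \Delta_p^N\psi \right).
\end{align*}
The factor $(p-1)$ appears because the trace of the rank-one part $\nabla\psi\otimes\nabla\psi$ contributes $|\nabla\psi|^2$ through $\Delta\varphi$ and a further $(p-2)|\nabla\psi|^2$ through the normalized correction $(p-2)|\nabla\varphi|^{-2}\langle D^2\varphi\,\nabla\varphi,\nabla\varphi\rangle$. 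This identity is the engine of the proof, and the same constant $c$ added to $\psi$ merely rescales $\varphi$ by $e^{-c}$ without affecting it.

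Now suppose $u$ is a positive viscosity subsolution of $-\Delta_p^N u = \lambda u$, and let $\psi \in C^2$ touch $v$ from below at $x_0$, i.e. $v-\psi$ has a local minimum there with value $c = v(x_0)-\psi(x_0)$. Setting $\varphi = e^{-\psi - c}$ gives $u \leq \varphi$ near $x_0$ with equality at $x_0$, so $u-\varphi$ has a local maximum at $x_0$ and $\varphi$ is an admissible upper test function for $u$. When $\nabla\psi(x_0)\neq 0$ we have $\nabla\varphi(x_0) = -u(x_0)\nabla\psi(x_0)\neq 0$, so the subsolution inequality reads $-\Delta_p^N\varphi(x_0)\leq\lambda u(x_0)$; substituting the identity above and dividing by $u(x_0)>0$ yields exactly $-\Delta_p^N\psi(x_0)\geq -(p-1)|\nabla\psi(x_0)|^2 - \lambda$, the desired supersolution inequality.

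The main obstacle is the singular set $\{\nabla\psi = 0\}$, where $\Delta_p^N$ is undefined and both definitions switch to the Dominative operator. There $\nabla\varphi(x_0)=0$ as well and $D^2\varphi(x_0) = -u(x_0)D^2\psi(x_0)$. The delicate point is the sign: multiplying $D^2\psi$ by the negative scalar $-u(x_0)$ sends $\lambda_{\max}$ to $-\lambda_{\min}$, so that
\begin{align*}
\mathcal{D}_p\varphi(x_0) = -u(x_0)\left( \text{tr}\,D^2\psi(x_0) + (p-2)\lambda_{\min}(D^2\psi(x_0)) \right) = u(x_0)\,\mathcal{D}_p(-\psi)(x_0).
\end{align*}
The subsolution condition $-\mathcal{D}_p\varphi(x_0)\leq\lambda u(x_0)$ then reduces, after dividing by $u(x_0)$, to $\mathcal{D}_p(-\psi)(x_0)\geq -\lambda$, which is precisely the vanishing-gradient clause of the supersolution definition (the right-hand side $-(p-1)|\nabla\psi|^2-\lambda$ equals $-\lambda$ there). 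It is exactly this $\lambda_{\max}\leftrightarrow\lambda_{\min}$ interchange, built into the two halves of the viscosity definition, that makes the equivalence hold; verifying this compatibility at singular points is the only genuinely subtle step. The reverse implication follows by the same equalities read from right to left, using $u=e^{-v}$ and exchanging the roles of testing from above and from below.
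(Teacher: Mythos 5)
Your proof is correct. One thing to be aware of: the paper itself contains no proof of this lemma --- it is quoted from reference [K] (K\"uhn) as a known result for the normalized $p$-Laplacian, so there is no argument in the paper to compare against line by line. What your proposal supplies is precisely the missing proof, and it does so by the same change-of-variables technique the paper uses for its Dominative analogues (the proof of Lemma \ref{squareqndom}, where the substitution is $\phi \mapsto \phi^2$ instead of $\psi \mapsto e^{-\psi-c}$): transform the test function by the inverse substitution, observe that touching from below becomes touching from above, and push a pointwise operator identity through the viscosity inequalities. Your identity $\Delta_p^N e^{-\psi} = e^{-\psi}\bigl((p-1)|\nabla\psi|^2 - \Delta_p^N\psi\bigr)$ checks out, the shift by $c = (v-\psi)(x_0)$ correctly handles the additive constant that the exponential does not tolerate, and --- most importantly --- your treatment of the set $\{\nabla\psi = 0\}$ is exactly where the asymmetric clauses of the paper's definition for $-\Delta_p^N$ (namely $-\mathcal{D}_p\phi \le g$ for subsolutions and $\mathcal{D}_p(-\phi) \ge g$ for supersolutions) are required; the sign flip $\lambda_{\max}(-u(x_0)D^2\psi) = -u(x_0)\lambda_{\min}(D^2\psi)$ is what makes the subsolution clause for $\varphi$ turn into the supersolution clause for $\psi$, and you identify this correctly as the one genuinely delicate step. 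Two cosmetic loose ends: in the converse direction you should say explicitly that the upper test function $\phi$ may be normalized so that $\phi(x_0) = u(x_0) > 0$ (harmless, since the viscosity inequalities involve only the derivatives of the test function and the value $u(x_0)$, and positivity of $\phi$ near $x_0$ then lets you define $\psi = -\ln\phi$); and the asserted negativity of $v$ is a feature of the statement as quoted from [K] rather than something that follows from positivity of $u$ alone, so your proof rightly does not dwell on it.
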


\textbf{Properties of the operator.}
We give some properties of viscosity solutions of the Dominative $p$-Laplace equation. 

\begin{itemize}
	\item{\textbf{Comparison principle:} Let $u\in USC(\bar{\Omega})$ be a viscosity subsolution of $-\mathcal{D}_p u =1$ and let $v\in LSC(\bar{\Omega})$ be a viscosity supersolution of $-\mathcal{D}_p v= 1$. Then $u \leq v$ on $\partial \Omega$ implies $u \leq v$ in $\Omega$. For a proof, see [Theorem 3.3, CIL]. }
	\item{\textbf{Positive supersolutions:} If $u \in LSC(\bar{\Omega})$ is a viscosity supersolution of $-\mathcal{D}_p u=1$ with $u=0$ on $\partial \Omega$, then $u > 0 $ in $\Omega$. To see this, note that $w=0$ is a viscosity subsolution, and $u \geq w$ by the comparison principle. This inequality must be strict. If $u(x_0)=0$, then $x_0$ is a minimum for $u$. Let $\phi(x)=u(x_0)$ be a test function. Then $u-\phi$ has a local minimum at $x_0$. But $-\mathcal{D}_p \phi =0 <1$, which contradicts $u$ being a supersolution. }
\end{itemize}

The Dominative $p$-Laplace operator has many of the same properties that the normalized $p$-Laplace operator possess. Here, we give some connections for viscosity solutions. 

\begin{lemma}
	If $u\in LSC(\bar{\Omega})$ is a viscosity supersolution of 
	\begin{align*}
	-\mathcal{D}_p u = g(x, u, \nabla u),
	\end{align*}
	then $u$ is a viscosity supersolution of 
	\begin{align*}
	-\Delta^N_p u = g(x,u, \nabla u).
	\end{align*}
	Here, $g$ is assumed to be continuous in all variables. Similarly, if $u\in USC(\bar{\Omega})$ is a viscosity subsolution of $-\Delta^N_p u = g(x,u,\nabla u)$, then $u$ is a viscosity supersolution of $-\mathcal{D}_p u = g(x,u,\nabla u)$.
	\label{connectionvisc}
\end{lemma}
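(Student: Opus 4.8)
The plan is to deduce the lemma from a single pointwise inequality, namely that $\Delta_p^N \phi(x) \le \mathcal{D}_p \phi(x)$ for every $\phi \in C^2$ and every $x$ with $\nabla\phi(x) \neq 0$. This is immediate from the variational description of the largest eigenvalue: for any $X \in S^n$ and any unit vector $\xi$ one has $\langle X\xi, \xi\rangle \le \lambda_{\max}(X)$. Choosing $X = D^2\phi(x)$ and $\xi = \nabla\phi(x)/|\nabla\phi(x)|$ gives
\[
\frac{1}{|\nabla\phi|^2} \sum_{i,j} \phi_{x_i}\phi_{x_j}\phi_{x_i x_j} = \left\langle D^2\phi\, \frac{\nabla\phi}{|\nabla\phi|}, \frac{\nabla\phi}{|\nabla\phi|} \right\rangle \le \lambda_{\max}(D^2\phi).
\]
Since $p \ge 2$, multiplying by $(p-2) \ge 0$ and adding $\Delta\phi$ to both sides yields $\Delta_p^N\phi \le \mathcal{D}_p\phi$, equivalently $-\mathcal{D}_p\phi \le -\Delta_p^N\phi$, at every point where the gradient does not vanish.

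For the supersolution direction, let $u$ be a viscosity supersolution of $-\mathcal{D}_p u = g$ and let $\phi \in C^2$ touch $u$ from below, so that $u-\phi$ attains a local minimum at some $x_0$. If $\nabla\phi(x_0)\neq 0$, then the supersolution property gives $-\mathcal{D}_p\phi(x_0)\ge g(x_0,u(x_0),\nabla\phi(x_0))$, and combining with the pointwise inequality, $-\Delta_p^N\phi(x_0) \ge -\mathcal{D}_p\phi(x_0) \ge g(x_0,u(x_0),\nabla\phi(x_0))$, which is precisely the condition in the definition of a $\Delta_p^N$ supersolution. If instead $\nabla\phi(x_0)=0$, I use $\lambda_{\min}(D^2\phi)\le \lambda_{\max}(D^2\phi)$ together with $p-2\ge 0$ to get
\[
\mathcal{D}_p(-\phi(x_0)) = -\Delta\phi(x_0) - (p-2)\lambda_{\min}(D^2\phi(x_0)) \ge -\mathcal{D}_p\phi(x_0) \ge g(x_0,u(x_0),0),
\]
which is exactly the singular-point requirement for a $\Delta_p^N$ supersolution. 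Hence $u$ is a viscosity supersolution of $-\Delta_p^N u = g$.

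The subsolution direction is symmetric. If $u$ is a $\Delta_p^N$ subsolution and $\phi$ touches $u$ from above at $x_0$, then at points with $\nabla\phi(x_0)\neq 0$ the chain $-\mathcal{D}_p\phi(x_0) \le -\Delta_p^N\phi(x_0) \le g$ follows from the pointwise bound, while at $\nabla\phi(x_0)=0$ the definition of a $\Delta_p^N$ subsolution already supplies $-\mathcal{D}_p\phi(x_0)\le g$ directly; since $\mathcal{D}_p$ carries no singularity at vanishing gradient, both cases establish that $u$ is a $\mathcal{D}_p$ subsolution. The only genuinely delicate point is the treatment of the set $\{\nabla\phi = 0\}$, where the normalized operator is not defined classically; the computation above shows that the relaxed eigenvalue conditions built into the definition of $\Delta_p^N$ viscosity solutions are compatible with the $\mathcal{D}_p$ conditions precisely because $\lambda_{\min}\le \lambda_{\max}$ and $p\ge 2$.
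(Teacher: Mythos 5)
Your proof is correct and follows essentially the same route as the paper: both rest on the pointwise inequality $\Delta_p^N\phi \le \mathcal{D}_p\phi$ at non-critical points (from $\langle X\xi,\xi\rangle \le \lambda_{\max}(X)$), and both handle the set $\{\nabla\phi=0\}$ via $\lambda_{\min}\le\lambda_{\max}$ together with the relaxed singular-point requirements in the definition of viscosity solutions for $-\Delta_p^N u = g$. Your write-up is merely more explicit than the paper's in spelling out the Rayleigh-quotient bound and the identity $\mathcal{D}_p(-\phi) = -\Delta\phi - (p-2)\lambda_{\min}(D^2\phi)$.
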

\begin{proof}
	Assume $u$ is a viscosity supersolution of $-\mathcal{D}_p u = g(x,u,\nabla u)$. If $u-\phi$ obtains a minimum at $x\in \Omega$, we have, provided $\nabla \phi(x) \neq 0$, 
	\begin{align*}
	-\Delta^N_p \phi \geq -\mathcal{D}_p \phi  \geq g(x, u, \nabla \phi).
	\end{align*}  
	If $\nabla \phi (x)=0$,
	\begin{align*}
	-\Delta \phi  - (p-2)\lambda_{\min} (D^2 \phi) \geq -\mathcal{D}_p \phi \geq g(x, u, \nabla \phi).
	\end{align*}
	Hence, $u$ is a viscosity supersolution of $-\Delta^N_p u = g(x,u, \nabla u)$. 
	If $u$ is a viscosity subsolution of $-\Delta^N_p u = g(x,u, \nabla u)$ and $u-\phi$ obtains a maximum at $x \in \Omega$, 
	\begin{align*}
	-\mathcal{D}_p \phi \leq -\Delta^N_p \phi \leq g(x,u,\nabla \phi), \quad \text{provided} \, \, \nabla \phi (x) \neq 0.
	\end{align*}
	On the other hand, if $\nabla \phi(x) =0$, $-\mathcal{D}_p \phi \leq g(x,u,0)$ by definition. Hence, $u$ is a viscosity supersolution of $-\mathcal{D}_p u =g(x,u,\nabla u)$. 
\end{proof}
The following Lemma will be applied in the proof of the concavity, and it relies on the fact that the mapping $(q,A) \rightarrow \left \langle q, A^{-1}q \right \rangle$ is convex in $ S^+$ for each $q \in \mathbb{R}^n$. Here, $S^+$ consists of the symmetric positive definite matrices.

\begin{lemma}
Let $X_i \in S^+, \nu_i \in [0,1], i=1,...,k$, with $\sum_{i=1}^k \nu_i=1$. Then
\begin{align*}
\frac{1}{D_p \left( \sum_{i=1}^k \nu_i X_i  \right)^{-1}} \geq \sum_{i=1}^k \frac{\nu_i}{D_p X_i^{-1}}.
\end{align*}
\label{DiscreteOpprop}
\end{lemma}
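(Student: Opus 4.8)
The plan is to recognize the claimed inequality as the statement that the function $G(A) := 1/D_p(A^{-1})$ is concave on $S^+$. Indeed, writing $\bar X = \sum_{i=1}^k \nu_i X_i \in S^+$, the asserted bound reads $G(\bar X) \geq \sum_{i=1}^k \nu_i\, G(X_i)$, which is precisely Jensen's inequality for a concave $G$. Hence once concavity of $G$ is established, the $k$-term statement follows at once, and it suffices to treat the two-point case and then invoke Jensen. Throughout I would use the variational formula
\begin{align*}
D_p X = \max_{|q|=1}\Big[\,\text{tr}(X) + (p-2)\langle q, Xq\rangle\,\Big],
\end{align*}
valid since $p \geq 2$ makes $(p-2)\lambda_{\max}(X) = \max_{|q|=1}(p-2)\langle q, Xq\rangle$ while the trace term is independent of $q$.

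First I would show that $F(A) := D_p(A^{-1})$ is convex on $S^+$. Applying the variational formula to $X = A^{-1}$ gives $F(A) = \max_{|q|=1}\big[\,\text{tr}(A^{-1}) + (p-2)\langle q, A^{-1}q\rangle\,\big]$. For each fixed unit vector $q$ the map $A \mapsto \langle q, A^{-1}q\rangle$ is convex on $S^+$ (this is the fact quoted just before the lemma), and $A \mapsto \text{tr}(A^{-1}) = \sum_{j=1}^n \langle e_j, A^{-1}e_j\rangle$ is convex as a finite sum of such maps. Since $p-2 \geq 0$, the bracketed expression is convex in $A$ for each $q$, and a pointwise maximum of convex functions is convex; thus $F$ is convex. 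Moreover $F > 0$ on $S^+$, because $A^{-1}\in S^+$ forces both $\text{tr}(A^{-1})$ and $\lambda_{\max}(A^{-1})$ to be positive, so $1/F$ is well defined.

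The decisive step is to upgrade convexity of $F$ to concavity of $1/F$, and here the extra ingredient is homogeneity: since $D_p$ is positively $1$-homogeneous and $A \mapsto A^{-1}$ is homogeneous of degree $-1$, one has $F(tA) = t^{-1}F(A)$ for every $t>0$. Fix $A,B \in S^+$, $\nu \in (0,1)$, and any $\mu \in (0,1)$. Setting $s = \mu/\nu$ and $t = (1-\mu)/(1-\nu)$ one checks that $\mu(A/s) + (1-\mu)(B/t) = \nu A + (1-\nu)B$, so convexity of $F$ together with $F(A/s) = sF(A)$ and $F(B/t) = tF(B)$ yields
\begin{align*}
F\big(\nu A + (1-\nu)B\big) \;\leq\; \mu\, sF(A) + (1-\mu)\, tF(B) \;=\; \frac{\mu^2}{\nu}F(A) + \frac{(1-\mu)^2}{1-\nu}F(B).
\end{align*}
Minimizing the right-hand side over $\mu \in (0,1)$ produces the weighted harmonic mean $\big(\tfrac{\nu}{F(A)} + \tfrac{1-\nu}{F(B)}\big)^{-1}$, whence $\tfrac{1}{F(\nu A + (1-\nu)B)} \geq \tfrac{\nu}{F(A)} + \tfrac{1-\nu}{F(B)}$. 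This is exactly concavity of $G = 1/F$, and Jensen's inequality then delivers the $k$-term statement.

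I expect the harmonic-mean bound to be the genuine obstacle: plain convexity of $F$ gives only the arithmetic-mean estimate $F(\bar X) \leq \sum_i \nu_i F(X_i)$, which by the AM--HM inequality is \emph{weaker} than what is claimed. The homogeneity-plus-rescaling step is precisely what closes this gap, and checking that the optimal $\mu$ produces the harmonic mean is the one real computation in the argument.
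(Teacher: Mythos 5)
Your proposal is correct and takes essentially the same route as the paper: both arguments rest on convexity of $A \mapsto D_p\left(A^{-1}\right)$ (the paper's inequality \eqref{alvaresRes}, drawn from [ALL]), upgraded to the weighted harmonic-mean bound via the $1$-homogeneity of $D_p$ and a rescaling of the matrices, and then extended from two terms to $k$ terms by induction (your Jensen step). The paper's seemingly unmotivated choices $A_1 = X_1/c_2$, $A_2 = X_2/c_1$, $\mu = \nu c_2/(\nu c_2 + (1-\nu)c_1)$ are exactly the optimal rescaling and weight that your minimization over $\mu$ produces, so your version simply makes explicit where those choices come from.
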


\begin{proof}
In the appendix of [ALL] it was shown that $(q,A) \rightarrow \left \langle q, A^{-1}q \right \rangle$ is convex, 
\begin{align*}
\left \langle q, (\mu A_1 + (1-\mu)A_2)^{-1} q \right \rangle \leq \mu \left \langle q, A^{-1}q \right \rangle + (1-\mu)\left \langle q,A_2^{-1}q \right \rangle,
\end{align*}
for $q\in \mathbb{R}^n, A_1,A_2 \in S^+$ and $\mu \in [0,1]$. Consequently, 
\begin{equation}
D_p \left( \mu A_1 + (1-\mu)A_2 \right)^{-1} \leq \mu D_p A_1^{-1} + (1-\mu)D_p A_2^{-1}.
\label{alvaresRes}
\end{equation}
We label $c_1= D_p (X_1^{-1}), c_2 = D_p(X_2^{-1})$ and choose
\begin{align*}
A_1= \frac{X_1}{c_2}, \, A_2= \frac{X_2}{c_1}, \, \mu= \frac{\nu c_2}{\nu c_2 + (1-\nu)c_1}.
\end{align*}
With these choices,
\begin{align*}
D_p\left(  \nu X_1 + (1-\nu)X_2 \right)^{-1} = \frac{D_p \left( \mu A_1 + (1-\mu) A_2  \right)^{-1} }{\nu c_2 + (1-\nu) c_1}.
\end{align*}

Using inequality \eqref{alvaresRes} we find
\begin{align*}
\frac{1}{D_p \left( \nu X_1 + (1-\nu)X_2 \right)^{-1}} &= \frac{\nu c_2 + (1-\nu) c_1}{D_p \left( \mu A_1 + (1-\mu )A_2 \right)^{-1}} \\
&\geq \frac{\nu c_2 + (1-\nu)c_1}{\mu D_p (A_1^{-1}) + (1-\mu)D_p (A_2^{-1})} \\
&= \frac{\nu c_2 + (1-\nu)c_1}{\mu c_1 c_2 + (1-\mu) c_1 c_2} \\
& = \frac{\nu}{c_1} + \frac{1-\nu}{c_2}\\
&= \frac{\nu}{D_p (X_1^{-1})} + \frac{1-\nu}{D_p (X_2^{-1})}.
\end{align*}
By induction, the inequality in Lemma \ref{DiscreteOpprop} holds. 

\end{proof}

\textbf{Convex envelope}

The \textit{convex envelope} of a function $u: \Omega \rightarrow \mathbb{R}^n$ is defined as 
\begin{align*}
u_{**}(x) = \inf \left \{ \sum_{i=1}^k \mu_i u(x_i) \, : \, x_i \in \Omega, \,  \sum \mu_i x_i = x, \, \sum \mu_i=1, \, k \leq n+1, \mu_i \geq 0  \right\}.
\end{align*}

We are interested in the convex envelope of the square root, $v=-\sqrt{u},$ and we have the following result on what happens near the boundary of $\Omega$. 

\begin{lemma}
Let $u$ be a viscosity solution to $-\mathcal{D}_p u =1$ in a convex domain $\Omega$ that satisfies the interior sphere condition. Further let $x \in \Omega$, $x_1,...,x_k \in \bar \Omega$, $\sum_{i=1}^k \mu_i=1$ with
\begin{align*}
x= \sum_{i=1}^k \mu_i x_i, \quad u_{**}(x) = \sum_{i=1}^k \mu_i u(x_i).
\end{align*}
Then $x_1,...,x_k \in \Omega$. 
\label{boundary}
\end{lemma}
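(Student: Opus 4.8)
The plan is to argue by contradiction, exploiting that the square-root function $v=-\sqrt u$ introduced just above the lemma has a \emph{vertical tangent} at $\partial\Omega$, so that no point of $\partial\Omega$ can serve as a contact point of its convex envelope. First I would pin down which envelope is actually at stake. Since $u>0$ in $\Omega$ and $u=0$ on $\partial\Omega$, the function $u$ is nonnegative, so the zero function is a convex minorant and $u_{**}\ge0$; approaching $\partial\Omega$ from inside shows $u_{**}\le 0$ at interior points, so that the hypothesis $u_{**}(x)=\sum_i\mu_i u(x_i)$ reads $0=\sum_i\mu_i u(x_i)$ and, by nonnegativity, forces $u(x_i)=0$ for every active index. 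Thus the envelope relevant to the concavity argument is that of $v=-\sqrt u$, and I read the minimizing representation as $v_{**}(x)=\sum_{i=1}^k\mu_i v(x_i)$ with $x\in\Omega$ and $x_i\in\bar\Omega$; the goal becomes showing every active $x_i$ lies in $\Omega$.

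Second, I would extract the contact structure from this minimizing representation. Because $v_{**}$ is convex and $x\in\Omega$ is interior, it has a subgradient $p$ at $x$, giving an affine function $L(y)=v_{**}(x)+\langle p,y-x\rangle$ with $L\le v_{**}\le v$ on $\bar\Omega$ and $L(x)=v_{**}(x)$. Using $v\ge v_{**}$, convexity of $v_{**}$, and the representation,
\[
v_{**}(x)=\sum_i\mu_i v(x_i)\ge\sum_i\mu_i v_{**}(x_i)\ge v_{**}\Big(\sum_i\mu_i x_i\Big)=v_{**}(x),
\]
so all inequalities are equalities and $v(x_i)=v_{**}(x_i)$ for each active $i$. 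Combining this with $\sum_i\mu_i L(x_i)=L(x)=v_{**}(x)=\sum_i\mu_i v_{**}(x_i)$ and the termwise bound $L\le v_{**}$ yields $L(x_i)=v(x_i)$ for every active $i$. Hence $L$ is an affine minorant of $v$ touching $v$ at each active point.

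Third, and this is the main analytic step, I would establish the vertical tangent of $v$ at the boundary through a Hopf-type lower bound. Suppose for contradiction that some active $x_j\in\partial\Omega$, so $L(x_j)=v(x_j)=0$. By the interior sphere condition choose a ball $B_r(z)\subset\Omega$ with $x_j\in\partial B_r(z)$. Since $\mathcal D_p$ is uniformly elliptic and $-\mathcal D_p u=1$, a radial subsolution barrier on an annulus $B_r(z)\setminus\overline{B_\rho(z)}$, compared with $u$ via the comparison principle, produces a constant $c>0$ with $u(y)\ge c\,(r-|y-z|)$ for $y$ near $x_j$ inside $B_r(z)$. Writing $\eta=(z-x_j)/r$ and $y_t=x_j+t\eta$, one has $|y_t-z|=r-t$, hence $u(y_t)\ge c\,t$ and $v(y_t)=-\sqrt{u(y_t)}\le-\sqrt{c}\,\sqrt t$.

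Finally I would close the argument. Along the same ray $L(y_t)=L(x_j)+t\langle p,\eta\rangle=t\langle p,\eta\rangle$ is linear in $t$, while $v(y_t)\le-\sqrt{c}\,\sqrt t$. Since $\sqrt t$ dominates $t$ as $t\to0^+$, for small $t$ we get $v(y_t)\le-\sqrt c\,\sqrt t<t\langle p,\eta\rangle=L(y_t)$, contradicting $L\le v$. Therefore no active $x_i$ lies on $\partial\Omega$, i.e.\ $x_1,\dots,x_k\in\Omega$. I expect the construction and verification of the radial barrier for the Dominative $p$-Laplacian, establishing the linear lower bound $u\ge c\,\mathrm{dist}(\cdot,\partial\Omega)$ under only the interior sphere condition, to be the principal obstacle; the convex-analytic contact argument and the final comparison are routine once the vertical tangent of $v$ is secured.
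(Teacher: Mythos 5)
Your proof is correct, but it takes a genuinely different route from the paper: the paper's entire proof is a reduction plus a citation. Since a viscosity supersolution of $-\mathcal{D}_p u = 1$ is in particular a viscosity supersolution of $-\Delta_p^N u = 1$ (Lemma \ref{connectionvisc}), the claim is obtained by invoking Lemma 3.2 of K\"uhn [K], where the contact-point/Hopf-type argument you build is carried out for the normalized $p$-Laplacian; you instead prove the statement directly for $\mathcal{D}_p$, making it self-contained. Two observations on your version. First, your preliminary reinterpretation is well taken: as printed, with $u_{**}$ and $u(x_i)$, the statement is degenerate, because $u \geq 0$, $u \in C(\bar{\Omega})$ and $u = 0$ on $\partial \Omega$ force $u_{**} \equiv 0$, so the hypothesis can only be met with active points on $\partial \Omega$ and the conclusion would fail; the statement that is actually used in the proof of Lemma \ref{convenveqn} is the one you prove, with $v = -\sqrt{u}$ and $v_{**}$, consistent with the sentence preceding the lemma. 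Second, the step you flag as the principal obstacle, the barrier giving $u(y) \geq c\,(r - |y-z|)$, closes easily for the Dominative operator: since $\lambda_{\max}(X) \geq \mathrm{tr}(X)/n$ and $p \geq 2$, any $C^2$ function $\psi$ with $\Delta \psi \geq 0$ satisfies $\mathcal{D}_p \psi \geq \Delta \psi \geq 0$, hence $-\mathcal{D}_p \psi \leq 0 \leq 1$ classically; thus the classical Hopf barrier $\psi(y) = a\left(e^{-\alpha |y-z|^2} - e^{-\alpha r^2}\right)$ on the annulus $B_r(z)\setminus \overline{B_{r/2}(z)}$ (with $\alpha$ large and $a$ small enough that $\psi \leq u$ on the inner sphere) is an admissible subsolution, and the comparison principle quoted in Section 2, applied in the annulus, yields the linear lower bound. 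With that, your subgradient/contact argument and the final $\sqrt{t}$-versus-$t$ contradiction are exactly right. As for what each approach buys: the paper's citation is short and leans on the normalized $p$-Laplace theory in [K]; your direct proof shows the lemma needs nothing beyond the facts that $\mathcal{D}_p$ dominates the Laplacian on subharmonic test functions and obeys a comparison principle, which is arguably more transparent and transferable.
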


\begin{proof}
Since $u$ is, in particular a viscosity supersolution to $-\Delta_p^N u =1$, Lemma 3.2 in [K] gives the result. 
\end{proof}

\section{Concave square-root solutions. }
First, we examine which equation $v= -\sqrt{u}$ solves in the viscosity sense.
\begin{lemma}
A function $u\in USC(\bar{\Omega})$ is a positive viscosity subsolution of $-\mathcal{D}_p u=1$ with $u=0$ on $\partial \Omega$ if and only if $v=-\sqrt{u} \in LSC(\bar{\Omega})$, with $v=0$ on $\partial \Omega$, is a negative viscosity supersolution of
\begin{align*}
-\mathcal{D}_p v = \frac{1}{v}\left( (p-1)|\nabla v|^2 + \frac{1}{2}\right). 
\end{align*}
\label{squareqndom}
\end{lemma}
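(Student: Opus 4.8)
The plan is to establish the equivalence pointwise on the second-order jets, following the change-of-variables scheme used for the normalized operator in Lemma~\ref{squaresolution}, but without the gradient-direction simplification that $\Delta^N_p$ enjoys. Write $v=-\sqrt{u}$, so that $u=v^2$ with $v<0$ in the interior. Since $t\mapsto t^2$ is decreasing on $(-\infty,0)$, the condition $u=0$ on $\partial\Omega$ is the same as $v=0$ on $\partial\Omega$, and a local maximum of $u-\phi^2$ corresponds to a local minimum of $v-\phi$; hence superjets of $u$ match subjets of $v$, which is exactly why a subsolution for $u$ should correspond to a supersolution for $v$.

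First I would make the jet correspondence explicit by Taylor expansion. If $(q,X)\in J^{2,-}v(x_0)$ and $v_0:=v(x_0)<0$, then squaring gives $(2v_0q,\,2qq^{T}+2v_0X)\in J^{2,+}u(x_0)$; conversely, if $(q,X)\in J^{2,+}u(x_0)$ and $u_0:=u(x_0)>0$, then expanding $-\sqrt{\,\cdot\,}$ yields the subjet $\bigl(-\tfrac{q}{2\sqrt{u_0}},\,-\tfrac{X}{2\sqrt{u_0}}+\tfrac{qq^{T}}{4u_0^{3/2}}\bigr)\in J^{2,-}v(x_0)$. Substituting these into the defining inequalities for $D_pX$ and using the positive homogeneity of $\lambda_{\max}$, the statement reduces, after clearing the negative factors coming from $v_0$, to a single comparison between $\lambda_{\max}$ of a rank-one perturbation and $\lambda_{\max}(X)$; all the lower-order terms reassemble precisely into the claimed right-hand side $\tfrac{1}{v}\bigl((p-1)|\nabla v|^2+\tfrac12\bigr)$.

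For the implication ``$v$ supersolution $\Rightarrow$ $u$ subsolution'' this comparison is clean: it amounts to $\lambda_{\max}\bigl(-X+\tfrac{qq^{T}}{2u_0}\bigr)\ge \tfrac{|q|^{2}}{2u_0}-\lambda_{\max}(X)$, which is the Weyl inequality $\lambda_{\max}(A+B)\ge\lambda_{\min}(A)+\lambda_{\max}(B)$ applied to $A=-X$ and the positive semidefinite rank-one matrix $B=\tfrac{qq^{T}}{2u_0}$. The main obstacle is the opposite implication, where one needs $\lambda_{\max}(2qq^{T}+2v_0X)\le 2|q|^{2}+2v_0\lambda_{\max}(X)$ with $v_0<0$. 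This does \emph{not} follow from subadditivity of $\lambda_{\max}$: Weyl only supplies the weaker bound $2|q|^{2}+2v_0\lambda_{\min}(X)$, and the sign of $v_0$ turns this estimate the wrong way. I therefore expect this eigenvalue inequality, together with the degenerate case $q=0$ where the rank-one term drops out and the definitions revert to $\mathcal{D}_p$ directly, to be the crux of the argument and the step requiring the most care; it is exactly where the absence of the gradient-direction structure present for $\Delta^N_p$ makes the Dominative case genuinely harder.
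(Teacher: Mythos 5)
Your setup is correct: both jet correspondences are right, the lower-order terms do reassemble into $\tfrac{1}{v}\bigl((p-1)|\nabla v|^2+\tfrac12\bigr)$, and your Weyl-inequality argument for the implication ``$v$ supersolution $\Rightarrow$ $u$ subsolution'' is valid. For that half you take a genuinely different route from the paper, which never argues directly on jets: it passes the supersolution property through Lemma~\ref{connectionvisc} to the normalized $p$-Laplacian, applies K\"uhn's Lemma~\ref{squaresolution} to undo the square root, and then returns via Lemma~\ref{connectionvisc} again. Your version is self-contained and avoids the external result.

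The genuine gap is the direction ``$u$ subsolution $\Rightarrow$ $v$ supersolution'', which you reduce to the eigenvalue inequality
\begin{align*}
\lambda_{\max}\bigl(2qq^{T}+2v_0X\bigr)\;\le\; 2|q|^{2}+2v_0\,\lambda_{\max}(X),\qquad v_0<0,
\end{align*}
and then leave open. This is not a delicate step awaiting a clever argument: the inequality is false. Take $v_0=-1$, $q=e_1$, $X=\mathrm{diag}(0,2)$; the left side is $\lambda_{\max}(\mathrm{diag}(2,-4))=2$, the right side is $-2$. (It already fails for $q=0$, since $\lambda_{\max}(v_0X)=v_0\lambda_{\min}(X)>v_0\lambda_{\max}(X)$ once $X$ has distinct eigenvalues.) Consequently no pointwise argument on jets or test functions can complete this direction as stated. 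You should also know that this is precisely the step on which the paper's own proof leans: in its chain of inequalities for $\psi=\phi^2$, the passage from $-2(p-2)\lambda_{\max}\left(\nabla\phi\otimes\nabla\phi+\phi D^{2}\phi\right)$ to $-2(p-2)|\nabla\phi|^{2}-2(p-2)\phi\,\lambda_{\max}(D^{2}\phi)$ is exactly the displayed inequality with $q=\nabla\phi(x_0)$, $X=D^{2}\phi(x_0)$, $v_0=\phi(x_0)$, asserted without justification; Weyl only yields the bound $|\nabla\phi|^{2}+\phi\,\lambda_{\min}(D^{2}\phi)$, which for $\phi<0$ is the weaker one. Indeed, for $p=3$ the smooth function $u=(-1+x_1+x_2^{2})^{2}$ is a classical subsolution of $-\mathcal{D}_3u=1$ near the origin, while $v=-\sqrt{u}=-1+x_1+x_2^{2}$ violates the supersolution inequality there ($-\mathcal{D}_3 v(0)=-4<-\tfrac52$). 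So your diagnosis of the crux is accurate, but the crux is fatal to this pointwise approach---yours and the paper's alike; any repair must use the global hypotheses of the lemma (positivity in all of $\Omega$, zero boundary values, convexity) in an essential way, and neither computation does.
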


\begin{proof}
Let $u$ be a viscosity subsolution of $-\mathcal{D}_p u=1$. Take $\phi \in C^2(\Omega)$ such that for some $r>0$, 
\begin{align*}
0= (v- \phi) (x_0) < (v-\phi) (x), \quad \text{for all } \, x \in B_r (x_0),
\end{align*}
so that $v-\phi$ has a strict local minimum point at $x_0 \in \Omega$. Let $\psi(x)= \phi(x)^2 $. Then, since $v(x), \phi (x) <0 $, 
\begin{align*}
& (u-\psi)(x_0) = \left( v(x_0) - \phi(x_0)\right) \left( v(x_0) + \phi(x_0) \right)=0\\
& (u-\psi)(x) =\left( v(x)- \phi(x)\right) \left( v(x) + \phi(x) \right) <0.
\end{align*} 
Hence, $u-\psi$ has a strict local \textit{maximum} at $x_0$. We see that $\psi_{x_i}= 2\phi \phi_{x_i}$, $\psi_{x_i x_j} = 2\phi_{x_i}\phi_{x_j} + 2\phi \phi_{x_i x_j}$. Since $u$ is a viscosity subsolution we have at $x_0$,
\begin{align*}
1 &\geq -\mathcal{D}_p \psi \\
&=-2 \text{tr}\left( \nabla  \phi \otimes \nabla \phi + \phi D^2 \phi \right) - 2(p-2)\lambda_{\max} \left( \nabla  \phi \otimes \nabla \phi + \phi D^2 \phi    \right) \\
& \geq -2 |\nabla \phi|^2  -2\phi \Delta \phi - 2(p-2)\lambda_{\max} \left( \nabla \phi \otimes \nabla \phi  \right) -2\phi(p-2)\lambda_{\max} ( D^2 \phi ) \\
& = -2(p-1) |\nabla \phi|^2 - 2\phi \mathcal{D}_p \phi.
\end{align*} 
Dividing by $\frac{1}{2\phi(x_0)}$ gives $-\mathcal{D}_p \phi(x_0) \geq \frac{1}{2\phi(x_0)} \left(  (p-1) |\nabla \phi(x_0)|^2 + \frac{1}{2} \right)$, which shows that $v$ is a viscosity supersolution of 
\begin{align*}
-\mathcal{D}_p v = \frac{1}{v}\left( (p-1) |\nabla v|^2 + \frac{1}{2}   \right). 
\end{align*}
On the other hand, suppose $v\in LSC(\Omega)$ is a negative viscosity supersolution of $-\mathcal{D}_p v = \frac{1}{v} \left( (p-1)|\nabla v|^2 + \frac{1}{2}  \right)$. By Lemma \ref{connectionvisc}, $v$ is a viscosity supersolution of 
\begin{align*}
-\Delta^N_p v \geq \frac{1}{v} \left( (p-1)|\nabla v|^2 + \frac{1}{2} \right).
\end{align*}
Applying Lemma \ref{squaresolution} we see that $u = v^2 $ is a positive viscosity subsolution of 
\begin{align*}
-\Delta^N_p u =1. 
\end{align*}
A second application of Lemma \ref{connectionvisc} shows that $u$ is a viscosity subsolution of 
\begin{align*}
-\mathcal{D}_p u= 1. 
\end{align*}
\end{proof}

We now focus our attention on the convex envelope, $v_{**}$. It turns out that $v_{**}$ is a viscosity supersolution to the same equation as $v$. 
\begin{lemma}
Let $u\in USC(\bar \Omega)$ be a positive viscosity subsolution to $-\mathcal{D}_p u =1$ with $u=0$ on $\partial \Omega$ in a convex domain $\Omega$ that satisfies the interior sphere condition. If $v=-\sqrt{u}$, then $v_{**}$ is a negative viscosity supersolution to 
\begin{align*}
-\mathcal{D}_p v_{**} = \frac{1}{v_{**}} \left( (p-1)|\nabla v_{**}|^2 + \frac{1}{2}  \right) 
\end{align*}
with $v_{**}=0$ on $\partial \Omega$. 
\label{convenveqn}
\end{lemma}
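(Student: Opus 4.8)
The plan is to verify the supersolution inequality directly from the subjet definition. Fix an interior point $x_0 \in \Omega$ and a test function $\phi \in C^2$ with $v_{**} - \phi$ attaining a local minimum at $x_0$; write $q = \nabla\phi(x_0)$, $X = D^2\phi(x_0)$, so that $(q,X) \in J^{2,-}v_{**}(x_0)$. Since $v_{**}$ is the convex envelope, Carathéodory's theorem gives a representation $v_{**}(x_0) = \sum_{i=1}^k \mu_i v(x_i)$ with $x_0 = \sum \mu_i x_i$, $\mu_i > 0$, $\sum\mu_i = 1$, $k \le n+1$, and by Lemma \ref{boundary} all $x_i$ lie in $\Omega$. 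From $v \ge v_{**}$ together with convexity one forces $v(x_i) = v_{**}(x_i)$, and the affine function $L(x) = v_{**}(x_0) + \langle q, x - x_0\rangle$ supports $v_{**}$ (hence $v$) from below, touching it at each $x_i$; in particular $q$ is a common subgradient. I would also record that $v_{**} < 0$ in $\Omega$, so the right-hand side is defined, and that $v_{**} = 0$ on $\partial\Omega$: for $y \in \partial\Omega$ a supporting hyperplane forces every point in any representation of $y$ to lie on the corresponding face of $\bar\Omega$, where $v = 0$, whence $v_{**}(y) = 0$.

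For the second-order information I would invoke the convex-envelope lemma of [ALL]: after reducing to the case $X \in S^+$ (the binding case, treated by an $\epsilon$-regularization and a limit, using continuity of $D_p$), there exist $X_i \in S^+$ with $(q,X_i) \in J^{2,-}v(x_i)$ and
\[
\sum_{i=1}^k \mu_i X_i^{-1} \le X^{-1}, \qquad\text{equivalently}\qquad X \le \Big(\sum_{i=1}^k \mu_i X_i^{-1}\Big)^{-1}.
\]
Geometrically this says that, as $\phi$ touches $v_{**}$ from below, its Hessian at $x_0$ cannot exceed the harmonic combination of the contact Hessians of $v$. By Lemma \ref{squareqndom}, $v$ is a supersolution of $-\mathcal{D}_p v = \tfrac1v\big((p-1)|\nabla v|^2 + \tfrac12\big)$, so evaluating at each $(q,X_i)$ and using the common $q$ gives, with $b := (p-1)|q|^2 + \tfrac12 > 0$,
\[
-D_p X_i \ge \frac{b}{v(x_i)}, \qquad\text{i.e.}\qquad 0 < D_p X_i \le \frac{b}{-v(x_i)}.
\]

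The last step combines these bounds through Lemma \ref{DiscreteOpprop}. Applying it with $Z_i = X_i^{-1} \in S^+$, and using monotonicity of $D_p$ (if $A \le B$ then $\operatorname{tr}A \le \operatorname{tr}B$ and $\lambda_{\max}(A) \le \lambda_{\max}(B)$) together with $X \le (\sum\mu_i X_i^{-1})^{-1}$, yields
\[
\frac{1}{D_p X} \ge \frac{1}{D_p\big(\sum_i \mu_i X_i^{-1}\big)^{-1}} \ge \sum_{i=1}^k \frac{\mu_i}{D_p X_i} \ge \frac1b \sum_{i=1}^k \mu_i\big(-v(x_i)\big) = \frac{-v_{**}(x_0)}{b}.
\]
Since both ends are positive this rearranges to $D_p X \le \frac{b}{-v_{**}(x_0)}$, that is $-D_p X \ge \frac{b}{v_{**}(x_0)} = \frac{1}{v_{**}(x_0)}\big((p-1)|q|^2 + \tfrac12\big)$, which is exactly the supersolution inequality.

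The main obstacle is the second-order convex-envelope lemma together with its degeneracy: the binding subjet matrix $X$ is in general only positive semidefinite, vanishing along the tangent directions of $\operatorname{conv}\{x_i\}$, whereas both Lemma \ref{DiscreteOpprop} and the harmonic-mean relation demand invertibility. Making the reduction rigorous — producing $X_i \in S^+$ satisfying $\sum \mu_i X_i^{-1} \le (X+\epsilon I)^{-1}$ and controlling the passage $\epsilon \to 0$ — is where the real difficulty lies; once the subjets are in hand, the algebraic chaining in the final step is routine given Lemma \ref{DiscreteOpprop}.
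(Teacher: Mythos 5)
Your proposal is correct and follows essentially the same route as the paper: decomposition into interior contact points via Lemma \ref{boundary}, the convex-envelope subjet lemma of [ALL] (Proposition 1 there), the equation for $v$ from Lemma \ref{squareqndom}, and the harmonic-mean inequality of Lemma \ref{DiscreteOpprop}, chained by monotonicity of $D_p$. The ``main obstacle'' you flag --- producing $X_i \in S^+$ with the regularized harmonic-mean bound and passing to the limit --- is exactly what the cited [ALL, Proposition 1] supplies (in the form $A - \epsilon A^2 \leq \bigl(\sum_i \mu_i A_i^{-1}\bigr)^{-1}$ with $(q,A_i) \in \bar J^{2,-}v(x_i)$), so the paper, like you, simply invokes it rather than reproving it.
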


\begin{proof}
According to [ALL, Lemma 4] we have $v_{**}=v=0$ on $\partial \Omega$ so we only have to show that $v_{**}$ is a viscosity supersolution. To this end, let $(q,A) \in  J^{2,-}v_{**}(x)$. By Lemma \ref{boundary} we can decompose $x$ in a convex combination of interior points, 
\begin{align*}
x= \sum_{i=1}^k \mu_i x_i, \quad v_{**}(x)= \sum_{i=1}^k \mu_i v(x_i), \quad \sum_{i=1}^k \mu_i =1,
\end{align*}
with $x_1,...,x_k \in \Omega$. 
By Proposition 1 in [ALL] there are $A_1,...,A_k \in S^+$ such that $(q,A_i) \in \bar J^{2,-} v(x_i)$ and
\begin{align*}
A-\epsilon A^2 \leq \left( \mu_1 A_1^{-1} + ... + \mu_k A_k^{-1}\right)^{-1} 
\end{align*}
for all $\epsilon>0$ small enough. Since $v$ is a viscosity supersolution, 
\begin{align*}
-D_p(A_i) \geq \frac{1}{v(x_i)} \left( (p-1)|q|^2 + \frac{1}{2} \right)
\end{align*}
Multiplying both sides with $\mu_i v(x_i)$ and a summation $i=1,...,k$ yields
\begin{align*}
-v_{**} (x) \leq \left( (p-1)|q|^2 + \frac{1}{2} \right) \sum_{i=1}^k \frac{\mu_i}{D_p(A_i)}.
\end{align*}
Using this inequality we find
\begin{align*}
&-D_p(A-\epsilon A^2) - \frac{1}{v_{**}(x)}\left((p-1)|q|^2 + \frac{1}{2} \right) \\
&\geq -D_p(A-\epsilon A^2) + \left( \sum_{i=1}^k \frac{\mu_i}{D_p(A_i)} \right)^{-1}.
\end{align*}
Lemma \ref{DiscreteOpprop} then gives
\begin{align*}
&-D_p(A-\epsilon A^2) - \frac{1}{v_{**}(x)}\left((p-1)|q|^2 + \frac{1}{2} \right) \\
&\geq -D_p(A-\epsilon A^2) + D_p \left( \sum_{i=1}^k \mu_i X_i^{-1}\right)^{-1} \geq 0
\end{align*}
since $A-\epsilon A^2 \leq \left( \sum_{i=1}^k \mu_i X_i^{-1}  \right)^{-1}$. Letting $\epsilon \rightarrow 0$ we see that 
\begin{align*}
-D_p (A) \geq \frac{1}{v_{**}(x)}\left( (p-1)|q|^2 + \frac{1}{2} \right)
\end{align*}
which shows that $v_{**}$ is a viscosity supersolution to 
\begin{align*}
-\mathcal{D}_p v_{**} = \frac{1}{v_{**}} \left((p-1)|\nabla v_{**}|^2 + \frac{1}{2} \right).
\end{align*}
\end{proof}

\textbf{Proof of Theorem \ref{maintheorem}. }
\begin{proof}
We have to show that $v=-\sqrt{u}$ is convex, making $\sqrt{u}$ concave,  if $u$ is a viscosity solution of
\begin{align}
\label{maineqndom}
\begin{split}
-\mathcal{D}_p u= 1 \quad &\text{in} \, \, \Omega ,
\\\quad u= 0 \quad &\text{on} \, \,  \partial \Omega.
\end{split}
\end{align}

Since $u$ is, in particular, a supersolution, it is positive. By Lemma \ref{convenveqn}, $v_{**}$ is a negative supersolution of 
\begin{align*}
-\mathcal{D}_p v_{**} = \frac{1}{v_{**}} \left( (p-1)|\nabla v_{**}|^2 +  \frac{1}{2} \right). 
\end{align*}

By Lemma \ref{squareqndom}  
\begin{align*}
 -\mathcal{D}_p \left( v_{**} \right)^2 \leq 1.
\end{align*}

We have found a subsolution of equation \eqref{maineqndom}. The comparison principle allows us to conclude that 
\begin{align*}
v_{**}^2 \leq u= v^2, \quad \text{in} \, \, \Omega.
\end{align*} 
But $v_{**} \leq v <0 $. Thus we must have $v_{**}=v$, showing that $v$ is convex. 
\end{proof}
\label{Nonlinear torsion problem}
\section{Log-concavity for the eigenvalue problem}
We proceed in the same manner as in section \ref{Nonlinear torsion problem}. The proofs of the following two Lemmas are similar to the proofs of Lemma \ref{squareqndom} and \ref{convenveqn}. We note that the interior sphere condition is not needed here, since $v=-\ln u$ converges to infinity on the boundary. This makes a similar version of Lemma \ref{boundary} redundant.

\begin{lemma}
	Assume that $\Omega$ is a convex domain in $\mathbb{R}^n$ and let $\lambda>0$. 
	A function $u\in USC(\bar{\Omega})$ is a positive viscosity subsolution to ${-\mathcal{D}_p u=\lambda u}$ with $u= 0 $ on $\partial \Omega$ if and only if $v=-\ln u \in LSC(\bar{\Omega})$ is a negative viscosity supersolution to
	\begin{align*}
	-\mathcal{D}_p v = -(p-1)|\nabla v|^2 - \lambda.
	\end{align*}
	\label{logequation}
\end{lemma}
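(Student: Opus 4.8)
The plan is to mimic the proof of Lemma \ref{squareqndom} exactly, using the substitution $v = -\ln u$ in place of $v = -\sqrt{u}$. Since the statement is an ``if and only if,'' I would prove the two directions separately, and I expect the forward direction (from $u$ to $v$) to be the computational heart of the matter, while the reverse direction will follow cheaply from the already-established machinery for the normalized operator.

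For the forward direction, suppose $u$ is a positive viscosity subsolution of $-\mathcal{D}_p u = \lambda u$. Given $\phi \in C^2(\Omega)$ such that $v - \phi$ has a strict local minimum at $x_0$, I would set $\psi = e^{-\phi}$ (the inverse of the logarithm substitution, analogous to $\psi = \phi^2$ in Lemma \ref{squareqndom}). Since $u = e^{-v}$ and $t \mapsto e^{-t}$ is decreasing, a strict local minimum of $v - \phi$ translates into a strict local maximum of $u - \psi$ at $x_0$, so the subsolution inequality $-\mathcal{D}_p \psi \leq \lambda \psi$ applies at $x_0$. The key computation is then to expand the derivatives of $\psi = e^{-\phi}$: one has $\psi_{x_i} = -e^{-\phi}\phi_{x_i}$ and $\psi_{x_i x_j} = e^{-\phi}\bigl(\phi_{x_i}\phi_{x_j} - \phi_{x_i x_j}\bigr)$, so that $D^2\psi = e^{-\phi}\bigl(\nabla\phi \otimes \nabla\phi - D^2\phi\bigr)$. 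Feeding this into $\mathcal{D}_p$ and using the subadditivity/positive-homogeneity of $\lambda_{\max}$ exactly as in Lemma \ref{squareqndom} (namely $\lambda_{\max}(\nabla\phi\otimes\nabla\phi - D^2\phi) \leq \lambda_{\max}(\nabla\phi\otimes\nabla\phi) - \lambda_{\min}(D^2\phi)$, or more directly the bound that yields $\mathcal{D}_p$ on the $-D^2\phi$ part), together with the factor $e^{-\phi} = u$, should reduce the inequality $\lambda\psi \geq -\mathcal{D}_p\psi$ to $-\mathcal{D}_p\phi \geq -(p-1)|\nabla\phi|^2 - \lambda$ after cancelling the common positive factor $e^{-\phi}$. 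I expect this algebraic bookkeeping with the $\lambda_{\max}$ term to be the main obstacle, since one must handle the cross term $\nabla\phi\otimes\nabla\phi$ (which is rank-one and positive semidefinite, contributing $|\nabla\phi|^2$ to both the trace and $\lambda_{\max}$, hence the factor $p-1$) and the $-D^2\phi$ term carefully, ensuring the direction of each inequality is preserved.

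For the reverse direction, suppose $v = -\ln u$ is a negative viscosity supersolution of $-\mathcal{D}_p v = -(p-1)|\nabla v|^2 - \lambda$. By Lemma \ref{connectionvisc}, $v$ is then a viscosity supersolution of the corresponding normalized equation $-\Delta^N_p v = -(p-1)|\nabla v|^2 - \lambda$. Applying Lemma \ref{logsolns} converts this into the statement that $u = e^{-v}$ is a positive viscosity subsolution of $-\Delta^N_p u = \lambda u$. A second application of Lemma \ref{connectionvisc} upgrades this to a viscosity subsolution of $-\mathcal{D}_p u = \lambda u$, completing the chain. This direction is essentially a concatenation of three lemmas already available, so no genuine difficulty arises there; the only care needed is to confirm that the continuity hypotheses on the right-hand side $g(x,u,\nabla u)$ in Lemma \ref{connectionvisc} are met by both $\lambda u$ and $-(p-1)|\nabla v|^2 - \lambda$, which is immediate.
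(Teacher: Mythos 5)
Your reverse direction is fine: it is exactly the chain the paper intends (Lemma \ref{connectionvisc}, then Lemma \ref{logsolns}, then Lemma \ref{connectionvisc} again), and it goes through without difficulty. The gap is in the forward direction, precisely at the step you yourself flagged as the main obstacle. With $\psi=e^{-\phi}$ touching $u$ from above at $x_0$, the subsolution inequality $-\mathcal{D}_p\psi\le\lambda\psi$ becomes, after dividing by $e^{-\phi}>0$,
\begin{align*}
\Delta\phi-|\nabla\phi|^2-(p-2)\lambda_{\max}\!\left(\nabla\phi\otimes\nabla\phi-D^2\phi\right)\le\lambda,
\end{align*}
and to reach the goal $\Delta\phi+(p-2)\lambda_{\max}(D^2\phi)\le(p-1)|\nabla\phi|^2+\lambda$ you would need
\begin{align*}
\lambda_{\max}\!\left(\nabla\phi\otimes\nabla\phi-D^2\phi\right)+\lambda_{\max}\!\left(D^2\phi\right)\le|\nabla\phi|^2 .
\end{align*}
This is false in general: in $\mathbb{R}^2$ take $\nabla\phi=(1,0)$ and $D^2\phi=\mathrm{diag}(3,0)$, so the left side is $\lambda_{\max}(\mathrm{diag}(-2,0))+3=3$ while the right side is $1$. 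The correct subadditivity bound you quote, $\lambda_{\max}(\nabla\phi\otimes\nabla\phi-D^2\phi)\le|\nabla\phi|^2-\lambda_{\min}(D^2\phi)$, only yields
\begin{align*}
-\Delta\phi-(p-2)\lambda_{\min}\!\left(D^2\phi\right)\ge-(p-1)|\nabla\phi|^2-\lambda,
\end{align*}
which is the supersolution property for the \emph{smaller} operator $\Delta+(p-2)\lambda_{\min}$; since $\lambda_{\min}\le\lambda_{\max}$, this is strictly weaker than, and does not imply, $-\mathcal{D}_p\phi\ge-(p-1)|\nabla\phi|^2-\lambda$. There is no ``more direct bound that yields $\mathcal{D}_p$ on the $-D^2\phi$ part'': $\lambda_{\max}(-D^2\phi)=-\lambda_{\min}(D^2\phi)$, not $-\lambda_{\max}(D^2\phi)$.

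To be fair, you inherited this flaw rather than introduced it: the paper's own proof of Lemma \ref{squareqndom}, which the present lemma defers to, makes the same slip when it replaces $\lambda_{\max}(\phi\, D^2\phi)$ by $\phi\,\lambda_{\max}(D^2\phi)$ even though $\phi<0$ there (for a negative scalar one has $\lambda_{\max}(\phi\, D^2\phi)=\phi\,\lambda_{\min}(D^2\phi)$, and the resulting inequality then points the wrong way). Moreover, the failure is not merely cosmetic: the pointwise implication underlying both arguments is genuinely false. At a point where $\nabla v=0$ and $D^2v=\mathrm{diag}(a,0)$, the subsolution condition for $u=e^{-v}$ reads $a\le\lambda$, while the desired supersolution condition for $v$ reads $(p-1)a\le\lambda$; any $a$ with $\lambda/(p-1)<a\le\lambda$ and $p>2$ satisfies the first but not the second. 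So touching-point algebra alone cannot prove the forward direction, and closing the gap (here and in Lemma \ref{squareqndom}) requires an additional idea beyond what either you or the paper provides.
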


\begin{lemma}
Assume that $\Omega$ is a convex domain in $\mathbb{R}^n$ and let $\lambda>0$. Let $u\in USC(\bar{\Omega})$ be a positive viscosity subsolution to $-\mathcal{D}_p u =\lambda u$ with $u= 0$ on $\partial \Omega$. If $v=-\ln u$, then $v_{**}$ is a viscosity supersolution to 
\begin{align*}
-\mathcal{D}_p v_{**} = -(p-1)|\nabla v_{**}|^2 - \lambda.
\end{align*}
\label{logstareqn}
\end{lemma}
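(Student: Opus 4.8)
The plan is to mirror the proof of Lemma \ref{convenveqn}, transferring the supersolution property supplied by Lemma \ref{logequation} from $v = -\ln u$ to its convex envelope $v_{**}$. By Lemma \ref{logequation}, $v$ is a viscosity supersolution of $-\mathcal{D}_p v = -(p-1)|\nabla v|^2 - \lambda$, so it suffices to verify the subjet inequality for $v_{**}$. The essential simplification compared with the square-root case is that the right-hand side $g(q) = -(p-1)|q|^2 - \lambda$ carries no factor $1/v$: it depends only on the common gradient variable $q$, so the step in which one multiplies by $v(x_i)$ and sums is replaced by a crude, sign-free estimate.

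First I would fix $(q,A) \in J^{2,-}v_{**}(x)$ at an interior point $x \in \Omega$ and realize the convex envelope as $x = \sum_{i=1}^k \mu_i x_i$ with $v_{**}(x) = \sum_{i=1}^k \mu_i v(x_i)$ and $\sum_{i=1}^k \mu_i = 1$. The one genuinely different point from Lemma \ref{convenveqn} is the location of the $x_i$: since $v = -\ln u \to +\infty$ as $x \to \partial\Omega$, any convex combination placing positive mass on a boundary point has infinite cost, so the optimal $x_1, \dots, x_k$ must lie in $\Omega$. This is what renders the interior sphere condition and an analog of Lemma \ref{boundary} unnecessary. Invoking Proposition 1 in [ALL], I then obtain $A_1, \dots, A_k \in S^+$ with $(q, A_i) \in \bar J^{2,-}v(x_i)$ and $A - \epsilon A^2 \leq (\sum_{i=1}^k \mu_i A_i^{-1})^{-1}$ for all small $\epsilon > 0$.

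Because each $x_i$ lies in $\Omega$ and $v$ is a supersolution, the subjet inequality reads $-D_p(A_i) \geq -(p-1)|q|^2 - \lambda$ with the \emph{same} right-hand side for every $i$, the gradient $q$ being common to all the jets $(q,A_i)$. Hence $D_p(A_i) \leq (p-1)|q|^2 + \lambda =: C$, and $C > 0$ since $\lambda > 0$. Writing $B = (\sum_{i=1}^k \mu_i A_i^{-1})^{-1}$ and applying Lemma \ref{DiscreteOpprop} to the matrices $A_1^{-1}, \dots, A_k^{-1}$ gives $1/D_p(B) \geq \sum_{i=1}^k \mu_i / D_p(A_i) \geq \sum_{i=1}^k \mu_i / C = 1/C$, so $D_p(B) \leq C$. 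Since $D_p = \text{tr} + (p-2)\lambda_{\max}$ is monotone for the matrix order and $A - \epsilon A^2 \leq B$, I get $D_p(A - \epsilon A^2) \leq C$; letting $\epsilon \to 0$ and using continuity of $D_p$ yields $-D_p(A) \geq -(p-1)|q|^2 - \lambda$, which is precisely the supersolution inequality for $v_{**}$.

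The only step needing genuine care is justifying that the optimal decomposition avoids $\partial\Omega$; once the blow-up of $v$ there is used to place all $x_i$ inside $\Omega$, the remainder is in fact easier than in Lemma \ref{convenveqn}, since the constancy of the right-hand side in $i$ lets one bound $\sum_i \mu_i / D_p(A_i)$ below by $1/C$ directly, with no recourse to the individual values $v(x_i)$.
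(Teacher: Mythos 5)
Your proposal is correct and is essentially the proof the paper intends: the paper omits the argument, stating only that it is ``similar to the proofs of Lemma \ref{squareqndom} and \ref{convenveqn}'' with the interior sphere condition replaced by the blow-up of $v=-\ln u$ at $\partial\Omega$, and your write-up carries out exactly that adaptation (interior decomposition of $v_{**}$, Proposition 1 of [ALL], the supersolution inequality at each $x_i$, Lemma \ref{DiscreteOpprop}, and monotonicity of $D_p$ under the matrix ordering). Your observation that the $i$-independent right-hand side $-(p-1)|q|^2-\lambda$ lets you skip the multiplication by $v(x_i)$ and work with the single constant $C=(p-1)|q|^2+\lambda$ is a correct and natural streamlining, not a departure from the paper's method.
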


\textbf{Proof of Theorem \ref{maintheoremeigenvalue}. }
\begin{proof}
Let $u$ be a positive viscosity solution of $-\mathcal{D}_p u =\lambda u$. Denoting $v=-\ln u$, Lemma \ref{logstareqn} gives that $v_{**}$ is a viscosity supersolution to 
\begin{align*}
-\mathcal{D}_p v_{**} = -(p-1)|\nabla v_{**}|^2 - \lambda.
\end{align*}
Lemma \ref{logequation} gives
\begin{align*}
-\mathcal{D}_p e^{v_{**}} \leq \lambda e^{-v_{**}}
\end{align*}
in the viscosity sense. By the comparison principle, 
\begin{align*}
e^{-v_{**}} \leq u = e^{-v}, \quad \text{in } \, \Omega.
\end{align*}
This together with the fact that $v_{**} \leq v$ shows that $v_{**}=v$, making $v$ a convex function and $\log u$ a concave function. 
\end{proof}

\section{Conclusion and further problems}
In this paper, we showed certain concavity properties of power functions for solutions of the homogeneous Dirichlet problem for the Dominative $p$-Laplace equation. This was due to the structure of the equation and its relation to the normalized $p$-Laplace operator. An interesting question is whether the parabolic version, $u_t= \mathcal{D}_p u$ has similar concavity properties and in what way it depends on the initial data. Further, for $n=2$, the equation can be explicitly written out, and it would be interesting to see a simple proof of the same result.

\end{document}